\DeclareMathOperator*{\argmax}{arg\,max}
\theoremstyle{definition}
\newcommand{\JEL}[1]{\par\noindent
{\small{\em \textsc{JEL Codes}\/}: #1}}
\newcommand{\Keywords}[1]{\par\noindent
{\small{\em \textsc{Keywords}\/}: #1}}
\newtheorem{theorem}{Theorem}
\newtheorem{corollary}{Corollary}
\newtheorem{definition}{Definition}
\newtheorem{lemma}{Lemma}
\begin{document}

\title{An Information-Theoretic Foundation\\for the Weighted Updating Model}

\author{Jesse Aaron Zinn\thanks{I appreciate Ted Bergstrom, Gary Charness, Itzhak Gilboa, Zack Grossman, Botond K\H{o}szegi, Jason Lepore, Dick Startz and seminar participants at Cal Poly San Luis Obispo, the 2014 Bay Area Behavioral and Experimental Economics Workshop, and the 2013 meetings of the Society for the Advancement of Behavioral Economics for valuable comments and suggestions.}}
\maketitle


\begin{abstract}
Weighted Updating generalizes Bayesian updating, allowing for biased beliefs by weighting the likelihood function and prior distribution with positive real exponents. I provide a rigorous foundation for the model by showing that transforming a distribution by exponential weighting (and normalizing) systematically affects the information entropy of the resulting distribution. For weights greater than one the resulting distribution has less information entropy than the original distribution, and vice versa. The entropy of a distribution measures how informative a decision maker is treating the underlying observation(s), so this result suggests a useful interpretation of the weights. For example, a weight greater than one on a likelihood function models an individual who is treating the associated observation(s) as being more informative than a perfect Bayesian would. 
\begin{singlespace}
\JEL{C02, D03, D83}
\Keywords{Bayesian Updating, Cognitive Biases, Irrational Expectations, Learning, Uncertainty}
\end{singlespace}
\end{abstract}

\section{Introduction}

The weighted updating model generalizes Bayes' rule to allow for biased learning. Despite the fact that this model has seen some use in economics and other disciplines, there has not yet been a rigorous interpretation of the model or justification for using it. Those who use the model have, heretofore, justified its use by appealing to intuition. This paper eliminates this shortcoming with a result that provides a rationale for using the model as it has been used. Namely, I show that transforming a distribution by exponentially weighting it and normalizing systematically affects the information entropy of the resulting distribution relative to that of the original distribution.

Whether the information entropy of the resulting distribution is greater or less than the original distribution depends on the magnitude of the exponential weight. If the weight is greater than one then the information entropy of the resulting distribution is less than that of the original distribution, and vice versa. Each of the distributions constituting Bayes' rule represents how either empirical observations or prior information affects beliefs. As such, the information entropies of these distributions measure how informative the individual regards these pieces of information. Therefore, this result provides the interpretation that weighting is a parametric method with which to model the treatment of data as either more or less informative than with Bayesian updating. One can thus say that weighted updating embodies a theory of biased judgement, wherein these biases are a result of the treatment of data as containing inaccurate levels of information content.

Literature that uses the weighting updated model in one form or another includes \citet{grether1980bayes,grether1992testing}, who estimates the exponential weights on the likelihood function and the prior distribution to find empirical evidence for the representativeness heuristic. \cite{ibrahim2000power} introduced \textit{power priors}, which allows the researcher to consider data from previous studies by putting a weight in $(0,1)$ on the likelihood function for that data and putting a weight of $1$ for current data. \cite{van2009dynamic} define a \textit{weighted product updating rule} and go on to prove that Bayes' rule and the Jeffrey updating rule are both special cases. \cite{palfrey2012speculative} use weighted updating to model investor under- and overreaction to public information about financial assets in a model with speculative pricing. \cite{benjamin2013model} use the weighted updating model to study non-belief in the law of large numbers.

\section{The Weighted Updating Model}\label{sec:InterpretWeights}

A decision maker will consider an observation (or sequence of observations) $x$ as an outcome from a stochastic process with probability density function $f(x|\theta)$, where $\theta$ is an unknown parameter that the decision maker considers to be from parameter space $\Theta$. Bayesian beliefs regarding the value of $\theta$ after observing $x$ are completely described by the posterior distribution $\pi(\theta|x)$. If we let $(\Theta,\mathcal{A},m)$ be a measure space and denote the likelihood function with $f(x|\theta)$ and the prior distribution with $\pi(\theta)$, then Bayes' rule states that
\[
\pi(\theta|x) = \frac{f(x|\theta)\pi(\theta)}{\int_\Theta f(x|\theta) \pi(\theta) \, dm(\theta)}.
\]

Weighted updating augments Bayes' rule with real-valued parameters $\alpha$ and $\beta$ as exponents respectively on the likelihood function and prior probability distribution. Denote the posterior distribution under weighted updating after observing $x$  by $\tilde{\pi}(\theta|x)$. Then the weighted updating model is given by

\begin{equation}\label{eq:WeightedUpdating}
\tilde{\pi}(\theta|x) = \frac{f(x|\theta)^\beta\pi(\theta)^\alpha}{\int_\Theta f(x|\theta)^\beta\pi(\theta)^\alpha \, dm(\theta)}.
\end{equation}

Both Bayes' rule and the weighted updating model can be stated without mention of the marginal distribution, which is not a function of $\theta$ and serves only as a normalization, ensuring that the posterior distribution aggregates to one over its support.\footnote{Throughout the paper, I assume all functions are measurable and integrable so that integrals are finite and well-defined. This assumption includes the functions generated by exponential weighting. In many cases, this assumption is innocuous because weighting a distribution with an exponent and rescaling results in a distribution from the original family, so integrability follows. However, this does not always hold true. For example, the function $(1-p)x^{-p}$ represents a distribution over $x \geq 1$ if and only if $p > 1$. Taking such a distribution to a power $\alpha < 1/p$ and doing the usual normalization does not result in another distribution, as the integral over $[1,\infty)$ of the resulting function diverges.} Thus, the weighted updating model can be displayed as 
\begin{equation}\label{WeightedUpdating.2}
\tilde{\pi}(\theta|x) \propto f(x|\theta)^\beta \pi(\theta)^\alpha \tag{\ref{eq:WeightedUpdating}$'$}.
\end{equation}
Stating the model as in expression \eqref{WeightedUpdating.2} emphasizes how the nature of the posterior distribution depends solely on the interaction between the prior distribution and the likelihood distribution, and how the weights $\alpha$ and $\beta$ affect this interaction. 

\cite{zinn2015expanding} expands upon the basic model in expression \eqref{eq:WeightedUpdating} in two ways: The first involves individuals who may discriminate between observations. Given a sequence of observations $h_t = (x_1,\ldots,x_t)$, the version of \eqref{WeightedUpdating.2} that allows for such discrimination would be
\begin{equation*}\label{eq:Discrimination}
\tilde{\pi}(\theta|h_t) \propto \pi(\theta)^\alpha \prod_{j=1}^t f(x_j|h_{j-1}, \theta)^{\beta_j},
\end{equation*}
where $\alpha$ still weights the prior distribution, $\beta_j$ is the weight associated with the $j$th observation $x_j$, and $h_j = (x_1,\ldots,x_j)$, for each $j \in \{1,\ldots,t\}$.  The second expansion of the basic weighted updating model allows the weights to change over time, which is simply a matter of letting the weights be functions of time, as in
\[
\tilde{\pi}(\theta|h_t) \propto f(h_t|\theta)^{\beta(t)} \pi(\theta)^{\alpha(t)}.
\]
In addition to those biases that \eqref{eq:WeightedUpdating} is capable of modelling, these expansions allow weighted updating to model the availability heuristic, base-rate neglect, the law of small numbers, non-belief in the law of large numbers, order effects (e.g. recency and primacy), the representativeness heuristic, and self-attribution bias.

\section{Monotone Concentration and Dispersion}

In this section, I investigate how exponentially weighting a distribution affects the resulting distribution relative to the original. Note that I change the notation from what is used in the previous section in an effort to make it clear that the following results apply to the likelihood function(s) \textit{and} the prior distribution individually. Likewise, results involving the weight $\gamma$ can be interpreted as true all of the exponential weights mentioned in the previous section.

Let $(\Omega, \mathcal{S}, M$) be a measure space and consider the transformation
\begin{equation}\label{eq:transform}
g  \longmapsto  \frac{g(\omega)^\gamma}{\int_\Omega g(\omega)^\gamma \, dM(\omega)}.
\end{equation}
If $\gamma > 0$ then \eqref{eq:transform} is a strictly increasing transformation, which implies that the same is true for its inverse. As such, the value(s) that maximize (or minimize) $g$ and the distribution proportional to $g^\gamma$ are identical. In other words, transformation \eqref{eq:transform} is necessarily mode-preserving. Note, however, that such a transformation is not necessarily mean-preserving, as is likely to be the case when $g$ is an asymmetric distribution. 

Most relevant for this work is that the exponent $\gamma$ affects how concentrated or dispersed the resulting distribution is. The following definition precisely describes what I mean by ``concentrated" and "dispersed".\footnote{Note that ``monotone dispersion'' is distinct from ``monotone spread'', a related concept due to \cite{quiggin1988increasingrisk}. A monotone dispersion differs from a monotone spread in that the latter is necessarily mean-preserving.}

\begin{definition}[Monotone Dispersion, Monotone Concentration]\label{def:dispersion}
	For two non-uniform probability distributions $\Gamma$ and $g$ on the same support $\Omega$, $\Gamma$ is a \textit{monotone dispersion of} $g$ if for all pairs $(\omega_1, \omega_2) \in \Omega^2$ the following three conditions hold: 
	\begin{align}
	g(\omega_1) = g(\omega_2) \quad & \Leftrightarrow \quad \Gamma(\omega_1) = \Gamma(\omega_2), \label{eq:MonSp1}\\
	g(\omega_1) > g(\omega_2) \quad & \Leftrightarrow \quad \Gamma(\omega_1) > \Gamma(\omega_2), \textrm{ and} \label{eq:MonSp2}\\
	g(\omega_1) > g(\omega_2) \quad & \Rightarrow \quad \frac{g(\omega_1)}{g(\omega_2)} > \frac{\Gamma(\omega_1)}{\Gamma(\omega_2)} \label{eq:MonSp3}.
	\end{align}
	If $\Gamma$ is a monotone dispersion of $g$ then $g$ is a \textit{monotone concentration of} $\Gamma$.\footnote{Uniform distributions are excluded from Definition \ref{def:dispersion} because if either $g$ or $\Gamma$ were uniform then the other would necessarily be uniform by condition \eqref{eq:MonSp1}, so they would be the same distribution. If this is the case then conditions \eqref{eq:MonSp2} and \eqref{eq:MonSp3} are only \textit{vacuously} true, which is not useful for our purposes because condition \eqref{eq:MonSp3} provides an asymmetry that allows one to compare different distributions. Excluding uniform distributions ensures that there is no case in which the relations ``is a monotone dispersion of'' and ``is a monotone concentration of'' are symmetric.}
\end{definition}

See Figure \ref{fig:MonDispersionConc} for an example of two distributions that are related to each other through monotone dispersion and concentration. Note that conditions \eqref{eq:MonSp1} and \eqref{eq:MonSp2} require that the transformations $g \mapsto \Gamma$ and $\Gamma \mapsto g$ are strictly increasing functions. Such monotonicity ensures that the ordinal properties are identical within pairs of distributions that are dispersions and concentrations of each other. In other words, two agents with beliefs that are related by monotone dispersion or concentration will agree on a rank ordering of events according to their likelihoods as given by their respective beliefs. Expression \eqref{eq:MonSp3} describes how the cardinal properties of a monotone dispersion or concentration differ from the original function, with a monotone dispersion being closer to a uniform distribution. Thus, a monotone concentration, with ``higher highs'' and ``lower lows'', is an exaggeration of any of its monotone dispersions. The following theorem states these notions rigorously.\footnote{Proofs for all results are in the appendix.}

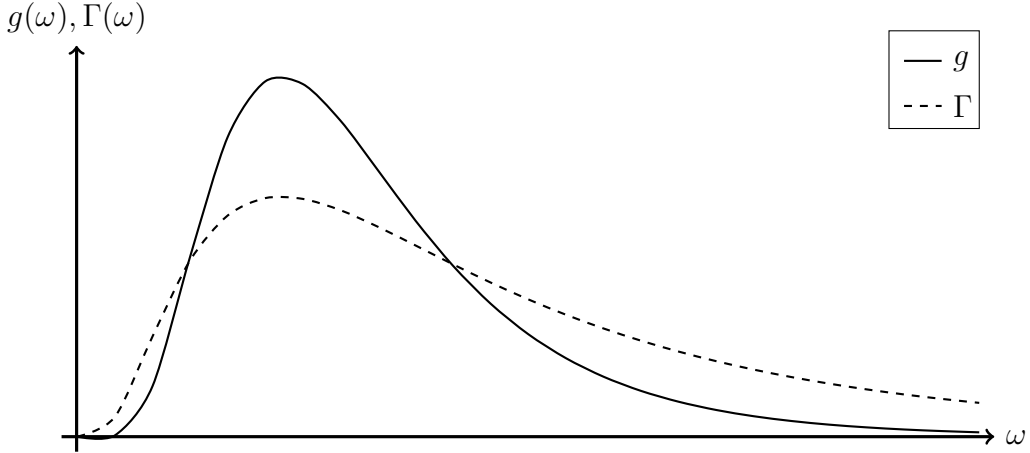
\begin{figure}
	\caption[Monotonic Dispersion and Concentration]{$\Gamma$ is a monotone dispersion of $g$, $g$ is a monotone concentration of $\Gamma$.}
	\label{fig:MonDispersionConc}
	\begin{center}
		\begin{tikzpicture}[smooth, scale = 1, domain=0.01:12]
		\draw                  (10.8,4.1) rectangle (12,5.4);
		\draw [thick]          (11,5) -- (11.5,5) node[right] {$g$};
		\draw [dashed, thick]   (11,4.4) -- (11.5,4.4) node[right]  {$\Gamma$};
		
		\draw[very thick, ->]  (-0.2,0)  --  (12.2,0)  node[right]  {$\omega$};
		\draw[very thick, ->]  (0,-0.2)  --  (0,5.2)  node[above]  {$g(\omega), \Gamma(\omega)$};
		
		\draw [thick]       plot  (\x,{6/(0.5*sqrt(2*pi)) * exp(-(ln(\x) - 1)^2/0.5)});
		\draw [dashed, thick] plot  (\x,{6/(0.75*sqrt(2*pi)) * exp(-(ln(\x) - 1)^2/1.125)});
		
		\end{tikzpicture}
	\end{center}
	
\end{figure}

\begin{theorem}\label{thm:HigherHighs}
	Let $\Gamma$ be a monotone dispersion of $g$. For any $\omega_1, \omega_2 \in \Omega$, 
	\[
	g(\omega_1) >g(\omega_2) \geq \Gamma(\omega_2) \quad \Rightarrow \quad g(\omega_1) > \Gamma(\omega_1).
	\]
	Also, 
	\[
	g(\omega_1) < g(\omega_2) \leq \Gamma(\omega_2) \quad \Rightarrow \quad g(\omega_1) < \Gamma(\omega_1).
	\]
\end{theorem}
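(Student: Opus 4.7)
The plan is to derive both implications directly from condition~\eqref{eq:MonSp3} of Definition~\ref{def:dispersion} via a single cross-multiplication argument, using the hypothesis comparing $g(\omega_2)$ and $\Gamma(\omega_2)$ to pin down the sign of $g(\omega_1) - \Gamma(\omega_1)$. Throughout, I treat $g$ and $\Gamma$ as strictly positive on $\Omega$, so every ratio is well-defined.

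For the first implication, I apply condition~\eqref{eq:MonSp3} to the pair $(\omega_1,\omega_2)$, whose hypothesis $g(\omega_1) > g(\omega_2)$ is satisfied, to obtain $g(\omega_1)/g(\omega_2) > \Gamma(\omega_1)/\Gamma(\omega_2)$. Cross-multiplying by the positive quantities $g(\omega_2)$ and $\Gamma(\omega_2)$ yields $g(\omega_1)\,\Gamma(\omega_2) > \Gamma(\omega_1)\,g(\omega_2)$. Next I insert the hypothesis $g(\omega_2) \geq \Gamma(\omega_2)$: multiplying both sides by $\Gamma(\omega_1) > 0$ gives $\Gamma(\omega_1)\,g(\omega_2) \geq \Gamma(\omega_1)\,\Gamma(\omega_2)$. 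Chaining the two inequalities produces $g(\omega_1)\,\Gamma(\omega_2) > \Gamma(\omega_1)\,\Gamma(\omega_2)$, and dividing out $\Gamma(\omega_2)$ delivers the desired $g(\omega_1) > \Gamma(\omega_1)$.

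The second implication is the mirror image. With $g(\omega_2) > g(\omega_1)$ now playing the role of the strict inequality, I apply condition~\eqref{eq:MonSp3} to the pair $(\omega_2,\omega_1)$, which after cross-multiplication gives $g(\omega_2)\,\Gamma(\omega_1) > \Gamma(\omega_2)\,g(\omega_1)$. Using $g(\omega_2) \leq \Gamma(\omega_2)$ together with $\Gamma(\omega_1) > 0$, I bound $\Gamma(\omega_1)\,g(\omega_2) \leq \Gamma(\omega_1)\,\Gamma(\omega_2)$, and chaining yields $\Gamma(\omega_1)\,\Gamma(\omega_2) > \Gamma(\omega_2)\,g(\omega_1)$; cancelling $\Gamma(\omega_2)$ leaves $\Gamma(\omega_1) > g(\omega_1)$.

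There is no real obstacle here beyond careful bookkeeping. The only subtle points are the implicit positivity of $g$ and $\Gamma$, which underlies both the cross-multiplication and the very statement of condition~\eqref{eq:MonSp3}, and remembering to insert the auxiliary hypothesis $g(\omega_2) \geq \Gamma(\omega_2)$ (or the reverse) at the right moment, since condition~\eqref{eq:MonSp3} by itself controls only ratios and not absolute magnitudes. That auxiliary bound is precisely what allows the dispersion/concentration comparison at the single point $\omega_2$ to propagate to a pointwise comparison at $\omega_1$.
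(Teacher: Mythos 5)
Your proof is correct and follows essentially the same route as the paper: both arguments apply condition \eqref{eq:MonSp3} to the relevant pair and then use the hypothesis $g(\omega_2) \geq \Gamma(\omega_2)$ (or its reverse) to transfer the comparison to $\omega_1$, the only difference being that you cross-multiply while the paper rearranges the ratio inequality into $\Gamma(\omega_2)/g(\omega_2) > \Gamma(\omega_1)/g(\omega_1)$ and bounds the left side by $1$. The second case, which the paper dismisses as symmetric, you write out explicitly and correctly.
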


The next theorem guarantees that if a distribution has a maximum then that maximum is no less than that of any associated monotone dispersion.
\begin{theorem}\label{cor:ConcMaxLarger}
	Let $\Gamma$ be a monotone dispersion of $g$ and let $\omega^*$ be a maximizer of $g$. Then $g(\omega^*) \geq \Gamma(\omega^*)$. If it is also the case that 
	\[
	M(\{\omega: g(\omega) < g(\omega*)\}) > 0,
	\]
	then $g(\omega^*) > \Gamma(\omega^*)$.
\end{theorem}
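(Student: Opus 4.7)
The plan is to prove both assertions by contradiction, using that $g$ and $\Gamma$ are probability densities against the same measure $M$, so $\int_\Omega g\, dM = \int_\Omega \Gamma\, dM = 1$. The one preliminary observation is that $\omega^*$ is a maximizer of $\Gamma$ as well as of $g$: conditions \eqref{eq:MonSp1} and \eqref{eq:MonSp2} together say that $g$ and $\Gamma$ induce the same weak order on $\Omega$, so $g(\omega^*) \geq g(\omega)$ for every $\omega$ forces $\Gamma(\omega^*) \geq \Gamma(\omega)$, i.e., $\Gamma(\omega^*) = \max_\omega \Gamma(\omega)$.

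For the first (weak) inequality, suppose toward contradiction that $g(\omega^*) < \Gamma(\omega^*)$. I would split $\Omega$ into the sets $E := \{\omega : g(\omega) = g(\omega^*)\}$ and $A := \{\omega : g(\omega) < g(\omega^*)\}$. On $E$, condition \eqref{eq:MonSp1} gives $\Gamma(\omega) = \Gamma(\omega^*) > g(\omega^*) = g(\omega)$. On $A$, the chain $g(\omega) < g(\omega^*) \leq \Gamma(\omega^*)$ is exactly the hypothesis of the second implication in Theorem \ref{thm:HigherHighs} (with $\omega_2 = \omega^*$), which yields $g(\omega) < \Gamma(\omega)$. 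Thus $g < \Gamma$ strictly on all of $\Omega$, and integrating against $M$ gives $\int_\Omega g\, dM < \int_\Omega \Gamma\, dM$, contradicting the common value $1$.

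For the strict-inequality part, I would instead suppose $g(\omega^*) = \Gamma(\omega^*)$ and run the same split. On $A = \{g(\omega) < g(\omega^*)\}$, the chain $g(\omega) < g(\omega^*) = \Gamma(\omega^*)$ still satisfies the hypothesis of Theorem \ref{thm:HigherHighs}, giving the strict pointwise inequality $g(\omega) < \Gamma(\omega)$. On $\Omega \setminus A$ every $\omega$ satisfies $g(\omega) = g(\omega^*)$, so by \eqref{eq:MonSp1} and the supposition, $\Gamma(\omega) = \Gamma(\omega^*) = g(\omega^*) = g(\omega)$. Since by hypothesis $M(A) > 0$, integrating yields $\int_\Omega g\, dM < \int_\Omega \Gamma\, dM$, once again a contradiction.

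I do not expect any serious obstacle: the argument is essentially one line once Theorem \ref{thm:HigherHighs} is available. The only bookkeeping is to confirm that the pointwise strict inequality upgrades to a strict integral inequality, which follows because $M(\Omega) > 0$ in the first part (the densities integrate to $1$) and $M(A) > 0$ by assumption in the second. The substantive content is simply that Theorem \ref{thm:HigherHighs} propagates a single counterfactual comparison at $\omega^*$ into a global comparison $g \leq \Gamma$ with strict inequality on a set of positive measure, which is incompatible with the two densities having the same total mass.
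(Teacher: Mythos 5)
Your proof is correct, but it travels a somewhat different route than the paper's. The paper argues directly: using \eqref{eq:MonSp3} on $\{\omega : g(\omega) < g(\omega^*)\}$ and \eqref{eq:MonSp1} on the set of maximizers, it establishes the unconditional ratio inequality $g(\omega)/g(\omega^*) \leq \Gamma(\omega)/\Gamma(\omega^*)$ for all $\omega$, integrates, and uses $\int_\Omega g \, dM = \int_\Omega \Gamma \, dM = 1$ to read off $1/g(\omega^*) \leq 1/\Gamma(\omega^*)$, with strictness following when $M(\{\omega : g(\omega) < g(\omega^*)\}) > 0$. You instead argue by contradiction and lean on Theorem \ref{thm:HigherHighs}: a counterfactual comparison at the single point $\omega^*$ (either $g(\omega^*) < \Gamma(\omega^*)$ or $g(\omega^*) = \Gamma(\omega^*)$) is propagated by Theorem \ref{thm:HigherHighs} to the set where $g$ is below its maximum and by \eqref{eq:MonSp1} to the remaining maximizers, producing a global pointwise comparison $g \leq \Gamma$ that is strict on a set of positive measure, contradicting equal total mass. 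Both arguments share the same skeleton (split $\Omega$ into the argmax set and its complement, integrate, invoke normalization), but yours exploits the already-proved Theorem \ref{thm:HigherHighs} rather than working from condition \eqref{eq:MonSp3} directly, which makes Theorem \ref{cor:ConcMaxLarger} appear as a quick corollary of Theorem \ref{thm:HigherHighs}; the paper's direct ratio argument is self-contained from the definition and delivers the weak and strict conclusions in a single pass without a contradiction. Your bookkeeping points (that $M(\Omega) > 0$ because the densities integrate to one, and that strict pointwise inequality on a positive-measure set yields a strict integral inequality) are exactly what is needed, and match the standard the paper itself uses in Lemma \ref{lem:NonEmpty}.
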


Interest in monotone dispersions and concentrations is due to the fact that when a distribution is weighted with a positive power and normalized, the resulting distribution is either a monotone dispersion or concentration of the original distribution depending on whether the weight is less than or greater than one, as stated in Theorem \ref{prop:PowerDispersion} below.

\begin{theorem}\label{prop:PowerDispersion}
	Let $g : \Omega \rightarrow \mathbb{R}_{++}$ be any non-uniform probability distribution. If $\gamma \in (0,1)$ then  
	\begin{equation}\label{eq:PowerDispersion}
	\frac{g(\omega)^\gamma}{\int_\Omega g(\omega)^\gamma \, dM(\omega)},
	\end{equation}
	is a monotone dispersion of $g$. If it is the case that $\gamma > 1$ then \eqref{eq:PowerDispersion} is a monotone concentration of $g$.
\end{theorem}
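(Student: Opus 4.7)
The plan is to write $\Gamma(\omega) := g(\omega)^\gamma / Z$, where $Z := \int_\Omega g(\omega)^\gamma \, dM(\omega)$ is a strictly positive constant by the integrability hypothesis and the positivity of $g$. Because $Z$ does not depend on $\omega$, it drops out of every ratio of the form $\Gamma(\omega_1)/\Gamma(\omega_2)$ and is immaterial to order comparisons. The proof then reduces to analyzing the pointwise map $x \mapsto x^\gamma$ on $\mathbb{R}_{++}$.

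For conditions \eqref{eq:MonSp1} and \eqref{eq:MonSp2} of Definition \ref{def:dispersion}, I would observe that since $\gamma > 0$, the map $x \mapsto x^\gamma$ is strictly increasing on $\mathbb{R}_{++}$. Hence for any $\omega_1,\omega_2 \in \Omega$,
\[
g(\omega_1) = g(\omega_2) \iff g(\omega_1)^\gamma = g(\omega_2)^\gamma \iff \Gamma(\omega_1) = \Gamma(\omega_2),
\]
and similarly with strict inequality. This step holds for every $\gamma > 0$ and therefore covers both cases of the theorem simultaneously.

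The main content is condition \eqref{eq:MonSp3}. The key computation is that
\[
\frac{\Gamma(\omega_1)}{\Gamma(\omega_2)} = \left(\frac{g(\omega_1)}{g(\omega_2)}\right)^{\!\gamma}.
\]
Let $r := g(\omega_1)/g(\omega_2)$, and suppose $g(\omega_1) > g(\omega_2)$, so $r > 1$. If $\gamma \in (0,1)$, then $r^\gamma < r$, which is exactly $\Gamma(\omega_1)/\Gamma(\omega_2) < g(\omega_1)/g(\omega_2)$, giving \eqref{eq:MonSp3} for the dispersion claim. If $\gamma > 1$, then $r^\gamma > r$, so the inequality flips: $g(\omega_1)/g(\omega_2) < \Gamma(\omega_1)/\Gamma(\omega_2)$, which together with conditions \eqref{eq:MonSp1} and \eqref{eq:MonSp2} applied to the pair $(g,\Gamma)$ in reverse shows that $g$ is a monotone dispersion of $\Gamma$, i.e. $\Gamma$ is a monotone concentration of $g$.

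I do not anticipate a serious obstacle; the only housekeeping point is to invoke the hypothesis that $g$ is non-uniform to guarantee that some pair $(\omega_1, \omega_2)$ with $g(\omega_1) \neq g(\omega_2)$ exists (so that $\Gamma$ is also non-uniform by \eqref{eq:MonSp1}), and to appeal to the paper's blanket integrability assumption to ensure $Z$ is finite and positive so that $\Gamma$ is a well-defined probability distribution on $\Omega$.
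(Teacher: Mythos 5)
Your proposal is correct and follows essentially the same route as the paper: conditions \eqref{eq:MonSp1} and \eqref{eq:MonSp2} from strict monotonicity of $x \mapsto x^\gamma$, and condition \eqref{eq:MonSp3} from comparing $\Gamma(\omega_1)/\Gamma(\omega_2) = (g(\omega_1)/g(\omega_2))^\gamma$ with $g(\omega_1)/g(\omega_2)$ after the normalizing constant cancels (the paper just derives the inequality $r^\gamma < r$ versus $r^\gamma > r$ by taking logarithms rather than stating it directly). No gap.
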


As long as both distributions are well-defined, Theorem \ref{prop:PowerDispersion} guarantees that a positively weighted distribution results in either a monotone dispersion or concentration of the original.

\section{How Should Dispersion be Measured?}

In this section, I study how the notions of dispersion and concentration from Definition \ref{def:dispersion} can be measured. I consider variance and show why it is not, in general, up to the task. In contrast, a distribution's information entropy is. I provide this result in Theorem \ref{thm:Entropy}, below, before discussing its implications for the weighted updating model.

\subsection{Variance?}

As variance is a widely used measure of how disperse a distribution is, one may suspect that a monotone dispersion would always have more variance than an associated monotone concentration. For many families of distributions this is indeed the case. Consider the normal distribution with mean $\mu$ and variance $\sigma^2$. It is straightforward to find that taking this distribution to the power $\gamma > 0$ results in a function that is proportional to the normal distribution with mean $\mu$ and variance $\sigma^2/\gamma$. After doing this manipulation, one can utilize Theorem \ref{prop:PowerDispersion} to note that $\gamma < 1$ leads to a monotone dispersion of the original distribution with greater variance and that a monotone concentration with less variance is the outcome if $\gamma > 1$.

Despite being true for the normal distribution (and many others), it is not the case for all distributions that a monotone dispersion implies greater variance and that a monotone concentration has less variance. Consider the beta distribution $B(a, b)$, which is proportional to $y^{a - 1}(1-y)^{b - 1}$ for $y \in [0,1]$ and parameters $a, b > 0$. Cases in which $a, b \in (0,1)$ result in a \textit{u}-shaped, strictly convex distribution with peaks at the extremes of the support, where $y = 0, 1$. Applying a monotone dispersion results in a more uniform distribution with less variance and applying a monotone concentration shifts mass toward the end-points of $[0,1]$, resulting in greater variance. For a more concrete example suppose $a = b = \nicefrac{3}{4}$. In this case, $B(\nicefrac{3}{4},\nicefrac{3}{4})$ has a variance of $\nicefrac{1}{10}$. Raising this distribution to the power $\gamma = 2$ and normalizing yields $B(\nicefrac{1}{2},\nicefrac{1}{2})$, the variance of which is $\nicefrac{1}{8}$. By Theorem \ref{prop:PowerDispersion}, $B(\nicefrac{1}{2},\nicefrac{1}{2})$ is a monotone concentration of $B(\nicefrac{3}{4},\nicefrac{3}{4})$, yet the former has a greater variance. This provide a counter-example against the general statement that a monotone concentration has less variance.

Variance does not have a consistent relationship with monotone dispersions and concentrations because it is a measure of dispersion \textit{from the mean of the distribution}. For a consistent relation with monotone dispersion and concentration it is necessary to have a measure of dispersion that is independent of reference points. Information entropy is a measure of dispersion that does not depend on reference points, so we consider it in the next subsection.

\subsection{Information Entropy?}

As will be shown subsequently in Theorem \ref{thm:Entropy}, a distribution's information entropy is a measure of dispersion or uncertainty that is invariably greater for monotone dispersions and less for monotone concentrations. Before getting to that result, I will introduce and briefly discuss the concept of information entropy.

\begin{definition}[Information Entropy, \citep{shannon1948mathematical}]\label{def:entropy}
	For any distribution $g: \Omega \rightarrow \mathbb{R}_{++}$, the \textit{information entropy of} $g$ is given by
	\[
	H(g) \equiv -\int_\Omega g(\omega) \log g(\omega) \, dM(\omega).
	\]
\end{definition}

The logarithm ensures that information entropy is additive in the densities of independent random variables. This is because for any two independent random variables $Y$ and $Z$ respectively distributed $g_Y$ and $g_Z$ and particular pair of events $(y, z)$,
\[
-\log g_Y(y)g_Z(z) = -\log g_Y(y) - \log g_Z(z).
\]

\cite{tribus1961thermostatics} dubbed  $-\log g(\omega)$ the \textit{surprisal} of $\omega$, for any distribution $g$ and particular $\omega \in \Omega$. Because $-\log g(\omega)$ is decreasing in $g(\omega)$, surprisal is greater for $\omega$ which (according to $g$) are less likely and, therefore, more \textit{surprising} outcomes. The information entropy of a distribution is equivalent to the \textit{expected surprisal}, as information entropy is the expected value of the surprisal of a distribution. If outcomes from one distribution are, on average, more surprising than outcomes from another distribution, then the first distribution can be thought of as containing less information than the second. Thus, distributions with greater information entropy will, on average, generate observations that have less information content, and vice versa.\footnote{The interpretation of information entropy as a measure of the uninformativeness of a distribution is consistent with the idea that physical entropy, which is proportional to information entropy by Boltzmann's constant, is a measure of one's \textit{ignorance} of a system. See, for example, the discussion in \citet[\S 5.3]{sethna2006entropy} for this interpretation of physical entropy along with a discussion of its relationship with information entropy.}

The following theorem verifies the claim that the information entropy of a monotone dispersion is greater than the information entropy of a monotone concentration.

\begin{theorem}\label{thm:Entropy}
	If $\Gamma$ is a monotone dispersion of $g$ then $H(\Gamma) \geq H(g)$. If, in addition, 
	\[
	M(\{\omega: \Gamma(\omega) \neq r\}) > 0
	\]\
	for some $r \in [b,B]$, where
	\[
	b \equiv \sup \Gamma(\{\omega : g(\omega) \leq \Gamma(\omega)\}) \in \mathbb{R}
	\]
	and
	\[
	B \equiv \inf \Gamma(\{\omega : g(\omega) \geq \Gamma(\omega)\}) \in \mathbb{R},
	\]
	then $H(\Gamma) > H(g)$.
\end{theorem}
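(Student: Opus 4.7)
The plan is to change variables so that the two-dimensional inequality collapses to a one-dimensional statement about an increasing function, then combine convexity of $u \mapsto u\log u$ with a pivot-point argument around the crossing value of the two densities.

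First, I would extract a functional representation $\Gamma = \psi(g)$ from Definition~\ref{def:dispersion}. Conditions \eqref{eq:MonSp1}--\eqref{eq:MonSp2} say exactly that $\Gamma(\omega)$ depends only on $g(\omega)$ and is a strictly increasing function of it, so $\psi$ is a well-defined strictly increasing map on the essential range of $g$. Condition \eqref{eq:MonSp3} rewrites as: $\psi(t)/t$ is strictly decreasing in $t$. Because both $g$ and $\psi(g)$ are probability densities, $\int(\psi(g)-g)\,dM = 0$, while the strict monotonicity of $\psi(t)/t$ rules out $\psi \equiv \mathrm{id}$. Hence there is a pivot value $t^\ast$ such that $\psi(t) > t$ for $t < t^\ast$ and $\psi(t) < t$ for $t > t^\ast$ on the range of $g$; a short check shows $b \le t^\ast \le B$, and that $\psi(t) < t^\ast$ on $\{t<t^\ast\}$ and $\psi(t) > t^\ast$ on $\{t>t^\ast\}$ using the strict monotonicity of $\psi$.

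Next, pushing forward $M$ by $g$ to a measure $\mu$ on $\mathbb{R}_{++}$, the quantities rewrite as
\[
H(\Gamma) - H(g) \;=\; \int \bigl[\,t\log t - \psi(t)\log\psi(t)\,\bigr]\, d\mu(t), \qquad \int (t-\psi(t))\, d\mu = 0.
\]
Using the tangent-line bound for the convex function $\Phi(u) = u\log u$, namely $\Phi(t) - \Phi(\psi(t)) \ge \Phi'(\psi(t))(t-\psi(t)) = (\log\psi(t)+1)(t-\psi(t))$, and absorbing the constant term via $\int(t-\psi(t))\,d\mu = 0$, I get
\[
H(\Gamma) - H(g) \;\ge\; \int \log\psi(t)\,(t-\psi(t))\, d\mu(t) \;=\; \int \bigl(\log\psi(t)-\log t^\ast\bigr)(t-\psi(t))\, d\mu(t),
\]
where in the last step I have subtracted another copy of the vanishing integral, now multiplied by $\log t^\ast$. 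On $\{t>t^\ast\}$ both factors of the integrand are nonnegative ($\psi(t)>t^\ast$ and $\psi(t)<t$), and on $\{t<t^\ast\}$ both are nonpositive, so the integrand is nonnegative pointwise, yielding $H(\Gamma) \ge H(g)$.

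For the strict inequality, both the tangent-line step and the pivot-sign step are strict on $\{t \ne t^\ast\}$, so it suffices that $\mu(\{t\neq t^\ast\})>0$, i.e.\ $M(\{g \ne t^\ast\})>0$. Since $\psi$ is injective, this is the same as $M(\{\Gamma \ne \psi(t^\ast)\})>0$, and $\psi(t^\ast)\in[b,B]$, so I would close the argument by checking that the theorem's hypothesis -- existence of an $r\in[b,B]$ with $M(\{\Gamma\ne r\})>0$ -- is exactly equivalent to this (any $r$ outside the essential range of $\Gamma$ trivially works, while if all of $[b,B]$ lies in the essential range of $\Gamma$ the interval degenerates to $\{t^\ast\}$). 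The main obstacle I anticipate is the bookkeeping in Step~1 and the location of $t^\ast$: $\psi$ may have jumps and $t^\ast$ may fail to lie in the range of $g$, so I would have to define $t^\ast$ as a separating value between the $\psi$-images of $\{\psi(t)\ge t\}$ and $\{\psi(t)\le t\}$ rather than as an honest fixed point, and then verify that the sign inequalities of the pivot step survive this weaker definition.
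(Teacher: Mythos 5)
Your outline has the same skeleton as the paper's proof: the tangent-line bound for the convex function $u\log u$ together with $\int (t-\psi(t))\,d\mu=0$ is exactly Gibbs' inequality (which the paper gets from Jensen) followed by subtracting $\log r\int(g-\Gamma)\,dM=0$, and your final display is the paper's expression \eqref{eq:Finale} with $r$ renamed $t^*$. The genuine gap is the pivot step --- the very step you flag as ``the main obstacle'' but do not resolve, and which is where the theorem's real content lies. Your $t^*$ is defined by a separation property stated in terms of $g$-values ($\psi(t)>t$ below $t^*$, $\psi(t)<t$ above), and for such a $t^*$ the assertions ``a short check shows $b\le t^*\le B$'' and ``$\psi(t)<t^*$ on $\{t<t^*\}$, $\psi(t)>t^*$ on $\{t>t^*\}$'' are false in general. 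Concretely, take counting measure on two points with $g=(0.3,\,0.7)$ and $\Gamma=(0.45,\,0.55)$; then $\Gamma$ is a monotone dispersion of $g$, $b=0.45$, $B=0.55$, and the value $t^*=0.7$ satisfies your defining property of a pivot, yet $t^*>B$, and at the point where $g=0.7$ the integrand $(\log\psi(t)-\log t^*)(t-\psi(t))=(\log 0.55-\log 0.7)(0.15)<0$, so the pointwise sign argument collapses. Strict monotonicity of $\psi$ cannot rescue this, because $t^*$ need not be a fixed point of $\psi$, nor even lie in the range of $g$.

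What is true is the reverse of what you assert: any $\Gamma$-value $r\in[b,B]$ automatically also separates the $g$-values (if $t\le\psi(t)$ then $\psi(t)\le b\le r$, and if $t\ge\psi(t)$ then $\psi(t)\ge B\ge r$), so the pivot must be chosen in $[b,B]$ from the start and the $g$-value separation comes for free. But that requires showing $[b,B]$ is a nonempty bounded interval: that $\{g\le\Gamma\}$ and $\{g\ge\Gamma\}$ are nonempty, that $b,B\in\mathbb{R}$, and above all that $b\le B$, which is precisely where condition \eqref{eq:MonSp3} does its work by ruling out the configuration $\Gamma(\omega_1)\ge g(\omega_1)>g(\omega_2)\ge\Gamma(\omega_2)$. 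This is the content of the paper's Lemmas \ref{lem:NonEmpty}--\ref{lem:r}, none of which your sketch supplies; without it the ``short check'' is exactly the missing proof. A secondary point: for the strict inequality you do not need the detour through $M(\{g\ne t^*\})>0$ and injectivity of $\psi$ (your $t^*$ and the theorem's $r$ live on different axes, so the claimed equivalence is not immediate); choosing the pivot to be the $r$ from the hypothesis makes positivity of the integral over $\{\Gamma\ne r\}$ immediate, as in the paper.
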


\begin{corollary}\label{corollary:MainResult}
	If $g$ and $\frac{g(\omega)^\gamma}{\int_\Omega g(\omega)^\gamma \, dM(\omega)}$ are distributions for some $\gamma > 0$ then
	\begin{align*}
	H(g) & > H\left(\frac{g(\omega)^\gamma}{\int_\Omega g(\omega)^\gamma \, dM(\omega)}\right) \quad \Leftrightarrow \quad \gamma >1,\textrm{ and}\\ 
	H(g) & < H\left(\frac{g(\omega)^\gamma}{\int_\Omega g(\omega)^\gamma \, dM(\omega)}\right) \quad \Leftrightarrow \quad \gamma <1.
	\end{align*}
\end{corollary}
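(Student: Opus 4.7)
The plan is to deduce this corollary directly from the two main structural results of the section: Theorem \ref{prop:PowerDispersion} identifies the exponentially weighted distribution as either a monotone dispersion or monotone concentration of $g$, and Theorem \ref{thm:Entropy} ranks entropies along this relation. Write $\Gamma_\gamma(\omega) \equiv g(\omega)^\gamma / \int_\Omega g(\omega)^\gamma \, dM(\omega)$. The biconditionals are only substantive when $g$ is non-uniform (if it is uniform then $\Gamma_\gamma = g$ and every inequality collapses to equality), so I would take non-uniformity as the standing hypothesis; this is also precisely what is required to invoke Theorem \ref{prop:PowerDispersion}.

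For the forward implication $\gamma > 1 \Rightarrow H(g) > H(\Gamma_\gamma)$, Theorem \ref{prop:PowerDispersion} yields that $\Gamma_\gamma$ is a monotone concentration of $g$, which is the same as saying $g$ is a monotone dispersion of $\Gamma_\gamma$. Theorem \ref{thm:Entropy} then delivers $H(g) \geq H(\Gamma_\gamma)$, with strict inequality provided its auxiliary measure-theoretic hypothesis holds. The analogous argument with $0 < \gamma < 1$ (this time $\Gamma_\gamma$ is a monotone dispersion of $g$) gives the forward implication for the second biconditional.

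For the converse implications, I would argue via trichotomy on $\gamma$: the case $\gamma = 1$ trivially produces $\Gamma_\gamma = g$ and hence $H(g) = H(\Gamma_\gamma)$, which is incompatible with either strict entropy ranking. Combined with the two forward implications above, this eliminates every alternative except the one stated, yielding both biconditionals simultaneously.

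The main obstacle is securing the strict inequality in the entropy comparison, which requires verifying the hypothesis of Theorem \ref{thm:Entropy} that $M(\{\omega : \Gamma(\omega) \neq r\}) > 0$ for some $r \in [b,B]$. This should follow from the non-uniformity of $g$: condition \eqref{eq:MonSp1} of Definition \ref{def:dispersion} transfers non-uniformity from $g$ to any monotone dispersion of it (and conversely to any monotone concentration), so the distribution being compared to $g$ attains at least two distinct values on sets of positive measure. Any $r$ strictly between those two values then provides a witness, once one checks it lies in $[b,B]$ by unpacking the definitions of $b$ and $B$. This bookkeeping is the only genuinely delicate step; everything else is an immediate chaining of the two prior theorems.
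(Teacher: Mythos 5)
Your proposal is correct and follows the paper's own route: the paper gives no separate appendix proof of Corollary \ref{corollary:MainResult}, presenting it merely as a syllogism whose premises are Theorems \ref{prop:PowerDispersion} and \ref{thm:Entropy}, which is exactly the chaining you carry out (your explicit non-uniformity caveat and the trichotomy argument for the converse directions simply make precise what the paper leaves implicit). One simplification of your ``delicate'' step: since Lemma \ref{lem:bB} gives $[b,B] \neq \emptyset$, \emph{any} $r \in [b,B]$ already serves as the witness for the strictness hypothesis of Theorem \ref{thm:Entropy}, because whenever the relevant dispersion ($g$ when $\gamma > 1$, the weighted distribution when $\gamma < 1$) is not almost-everywhere constant the set where it differs from $r$ automatically has positive measure --- so there is no need to locate an $r$ strictly between two attained values and then check that it lies in $[b,B]$.
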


The preceding corollary applies Theorem \ref{thm:Entropy} under the special case where the monotone dispersion or concentration is a result of exponential weighting. It is essentially a syllogism with Theorems \ref{prop:PowerDispersion} and \ref{thm:Entropy} as premises.

Corollary \ref{corollary:MainResult} provides a rationale and rigorous interpretation of the weighted updating model. Consider expression \eqref{eq:WeightedUpdating}. Corollary \ref{corollary:MainResult} implies that if $\alpha > 1$ then the prior information is being overemphasized and treated with more information content than it should, and the opposite holds if $\alpha <1$. The same interpretation holds for $\beta$ with respect to the how informative $x$ is treated. So if, say, $\alpha > 1$ and $\beta < 1$ then the beliefs represented by that model are made in a manner wherein the prior information is treated with more information content than it should be and $x$ is treated as though it has too little information content.

\section{Concluding Remarks}\label{sec:Conclusion}

In this paper, I provide an interpretation of weighted updating as a method of modelling individuals who treat information as either more or less informative than under Bayes' rule. In particular, I show that weighting the functions primitive to Bayes' rule transforms the functions by monotone dispersion or monotone concentration, and that these transformations systematically affect the information entropies of the resulting likelihood function(s) and prior probability distribution.

This interpretation of weighting a distribution suggests that, on its own, weighted updating may be appropriate to model only those biases in which individuals correctly interpret information, but for some reason do not use the information in a rational way. Thus, for example, weighted updating may be utilized to model biases based on self-deception\footnote{Self-deception typically involves individuals who downplay or overemphasize the importance of certain pieces of evidence in a systematic way \citep{hirshleifer2001investor}.} or the cognitive limitations of utilizing correctly interpreted data. However, it may not be appropriate for modelling the type of confirmation bias studied by \cite{rabin1999first}, which involves decision makers who misinterpret information. Still, there is no reason why there should be only one type of bias affecting belief formation; one could, for example, model individuals who misinterpret evidence using the framework of \cite{rabin1999first} and then process the misinterpreted information irrationally using weighted updating.

\appendix

\section*{Appendix: Proofs}\label{[Appendix]}

\begin{proof}[Proof of Theorem \ref{thm:HigherHighs}]
	Let $g(\omega_1) >g(\omega_2) \geq \Gamma(\omega_2)$. As $\Gamma$ is a monotone dispersion of $g$, $g(\omega_1) >g(\omega_2)$ implies
	\[
	\frac{g(\omega_1)}{g(\omega_2)} > \frac{\Gamma(\omega_1)}{\Gamma(\omega_2)},
	\]
	which can be rearranged to obtain
	\[
	\frac{\Gamma(\omega_2)}{g(\omega_2)} > \frac{\Gamma(\omega_1)}{g(\omega_1)}.
	\]
	Now utilize $g(\omega_2) \geq \Gamma(\omega_2)$ to augment the above inequality to obtain
	\[
	1 \geq \frac{\Gamma(\omega_2)}{g(\omega_2)} > \frac{\Gamma(\omega_1)}{g(\omega_1)}.
	\]
	And so, $g(\omega_1) > \Gamma(\omega_1)$. The other case implying the opposite conclusion is symmetric.
\end{proof}

\begin{proof}[Proof of Theorem \ref{cor:ConcMaxLarger}]
	Let $\omega^* \in \argmax_{\omega \in \Omega} g(\omega)$. The hypothesis that $\Gamma$ is a monotone dispersion of $g$ implies that both $\Gamma$ and $g$ are non-uniform, so 
	\[
	\{\omega: g(\omega) < g(\omega^*)\} \neq \emptyset.
	\]
	For any $\omega_0 \in \{\omega: g(\omega) < g(\omega*)\}$, condition \eqref{eq:MonSp3} from Definition \ref{def:dispersion} indicates that 
	\begin{equation}\label{eq:MaxGT}
	\frac{g(\omega_0)}{g(\omega^*)} < \frac{\Gamma(\omega_0)}{\Gamma(\omega^*)}.
	\end{equation}
	For any $\omega^{**} \in \argmax_{\omega \in \Omega} g(\omega)$ it is the case that $g(\omega^{**}) = g(\omega^*)$. Therefore, condition \eqref{eq:MonSp1} tells us
	\begin{equation}\label{eq:MaxET}
	\frac{g(\omega^{**})}{g(\omega^*)} = \frac{\Gamma(\omega^{**})}{\Gamma(\omega^*)}.
	\end{equation}
	Since 
	\[
	\{\omega: g(\omega) < g(\omega^*)\} \, \cup \, \argmax_{\omega \in \Omega} g(\omega) = \Omega, 
	\]
	expressions \eqref{eq:MaxGT} and \eqref{eq:MaxET} imply
	\[
	\frac{g(\omega)}{g(\omega^*)} \leq \frac{\Gamma(\omega)}{\Gamma(\omega^*)} \quad \textrm{ for all } \omega \in \Omega.
	\]
	Integrating over $\Omega$ yields
	\begin{equation}\label{eq:Integral}
	\frac{\int_\Omega g(\omega) \, dM(\omega)}{g(\omega^*)} \leq \frac{\int_\Omega \Gamma(\omega) \, dM(\omega)}{\Gamma(\omega^*)}.
	\end{equation}
	As both $g$ and $\Gamma$ are probability distributions, the value of the integrals in the numerators of expression \eqref{eq:Integral} is one. Thus,
	\begin{equation*}\label{eq:AfterIntegral}
	\frac{1}{g(\omega^*)} \leq \frac{1}{\Gamma(\omega^*)},
	\end{equation*}
	which is true only if $g(\omega^*) \geq \Gamma(\omega^*)$. If it is also the case that \[
	M(\{\omega: g(\omega) < g(\omega^*)\}) > 0,
	\] 
	then the inequality in expression \eqref{eq:Integral} is strict, from which $g(\omega*) > \Gamma(\omega*)$ follows.
\end{proof} 

\begin{proof}[Proof of Theorem \ref{prop:PowerDispersion}]
	Let $\gamma \in (0,1)$. Conditions \eqref{eq:MonSp1} and \eqref{eq:MonSp2} are satisfied immediately, as a consequence of the fact that raising to a positive exponent is a monotonically increasing transformation. As $g$ is non-uniform there exists a pair $(\omega_1, \omega_2) \in \Omega^2$ for which $g(\omega_1) > g(\omega_2) > 0$. For any such pair, multiplying each term of $0 < \gamma < 1$ by $\log (g(\omega_1)/g(\omega_2))$ yields
	\[
	0 < \gamma\log \frac{g(\omega_1)}{g(\omega_2)} < \log \frac{g(\omega_1)}{g(\omega_2)},
	\]
	which implies that
	\[
	1 < \frac{g(\omega_1)^\gamma}{g(\omega_2)^\gamma} < \frac{g(\omega_1)}{g(\omega_2)}.
	\]
	Dividing both the numerator and denominator of the center term by the normalizing factor $\int_\Omega g(\omega)^\gamma \, dM(\omega) > 0$ yields,
	\[
	1 < \frac{g(\omega_1)^\gamma / \int_\Omega g(\omega)^\gamma \, dM(\omega)}{g(\omega_2)^\gamma / \int_\Omega g(\omega)^\gamma \, dM(\omega)} < \frac{g(\omega_1)}{g(\omega_2)},
	\]
	showing that condition \eqref{eq:MonSp3} is satisfied. The case for $\gamma > 1$ yielding a monotone concentration is proved analogously.
\end{proof}

The proof of Theorem \ref{thm:Entropy} utilizes the following lemmas.

\begin{lemma}\label{lem:NonEmpty}
	If $g$ and $\Gamma$ are distributions with the same support $\Omega$ then both of the sets
	\[
	\{\omega:g(\omega) \leq \Gamma(\omega)\} \qquad \textrm{and} \qquad \{\omega:g(\omega) \geq \Gamma(\omega)\}
	\]
	are nonempty.\footnote{I doubt that this result is original to this work. I include it, however, for sake of completeness.}
\end{lemma}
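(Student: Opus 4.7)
The plan is to argue both parts by contradiction via a simple integration argument, since the two statements are perfectly symmetric.

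First I would suppose, toward a contradiction, that $\{\omega : g(\omega) \leq \Gamma(\omega)\} = \emptyset$. This means that $g(\omega) > \Gamma(\omega)$ for every $\omega \in \Omega$, so the function $g - \Gamma$ is strictly positive everywhere on $\Omega$. I would then integrate this pointwise inequality against $M$. Since $g$ is a probability distribution, $\int_\Omega g \, dM = 1$, and in particular $M(\Omega) > 0$ (otherwise the integral would vanish). With $M(\Omega) > 0$ and $g - \Gamma > 0$ pointwise on $\Omega$, I obtain the strict inequality
\[
1 = \int_\Omega g \, dM > \int_\Omega \Gamma \, dM = 1,
\]
which is the desired contradiction. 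Hence $\{\omega : g(\omega) \leq \Gamma(\omega)\}$ must be nonempty.

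The second claim is proved by the symmetric argument with the roles of $g$ and $\Gamma$ exchanged: if $\{\omega : g(\omega) \geq \Gamma(\omega)\}$ were empty, then $\Gamma > g$ everywhere on $\Omega$, and integrating gives $1 = \int \Gamma \, dM > \int g \, dM = 1$, another contradiction.

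There is no real obstacle here; the only subtlety to flag is the need to pass from the strict pointwise inequality to a strict integral inequality, which is valid because $M(\Omega)$ is forced to be positive by the mere fact that $g$ integrates to $1$. Since $g$ and $\Gamma$ share the same support $\Omega$ (with strictly positive densities, per the paper's convention $g, \Gamma : \Omega \to \mathbb{R}_{++}$), no further measure-theoretic care is needed.
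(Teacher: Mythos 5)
Your proposal is correct and follows essentially the same argument as the paper: assume one set is empty, integrate the resulting strict pointwise inequality to get $1 > 1$, and handle the other set by swapping $g$ and $\Gamma$. Your extra remark justifying the strict integral inequality (via $M(\Omega) > 0$ and strictly positive densities) is a small refinement of a step the paper leaves implicit, but the route is the same.
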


\begin{proof}
	Suppose $\{\omega:g(\omega) \leq \Gamma(\omega)\} = \emptyset$. Then $g(\omega) > \Gamma(\omega)$ for all $\omega \in \Omega$. This implies
	\[
	\int_\Omega g(\omega) \, dM(\omega) > \int_\Omega \Gamma(\omega) \, dM(\omega),
	\] 
	which, in turn, implies that $1>1$. This contradiction means 
	\[
	\{\omega:g(\omega) \leq \Gamma(\omega)\} \neq \emptyset.
	\]
	Showing that $\{\omega:g(\omega) \geq \Gamma(\omega)\} \neq \emptyset$ follows by simply transposing $g$ and $\Gamma$.
\end{proof}

Recall from Theorem \ref{thm:Entropy} the following definitions which are used in the next several results:
\[
b \equiv \sup \Gamma(\{\omega : g(\omega) \leq \Gamma(\omega)\}) \in \mathbb{R}.
\]
\[
B \equiv \inf \Gamma(\{\omega : g(\omega) \geq \Gamma(\omega)\}) \in \mathbb{R}.
\]

\begin{lemma}\label{lem:InR}
	If $g$ is a monotone concentration of $\Gamma$ then $b, B \in \mathbb{R}$.
\end{lemma}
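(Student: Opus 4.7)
My plan is to reduce the claim to two ingredients: the nonemptiness lemma (Lemma \ref{lem:NonEmpty}) and a uniform comparison between $\Gamma$-values on the sets $S := \{\omega : g(\omega) \leq \Gamma(\omega)\}$ and $T := \{\omega : g(\omega) \geq \Gamma(\omega)\}$. Since $\Gamma$ takes values in $(0,\infty)$, the quantities $b = \sup\Gamma(S)$ and $B = \inf\Gamma(T)$ are already in $[0,\infty]$; so the only worry is that $b$ might equal $+\infty$ (the quantity $B$ is automatically finite as the infimum of a nonempty subset of $[0,\infty)$, where nonemptiness comes from Lemma \ref{lem:NonEmpty}).

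The core claim I will establish is: for every $\omega_1 \in S$ and every $\omega_2 \in T$, one has $\Gamma(\omega_1) \leq \Gamma(\omega_2)$. Once this is in hand, Lemma \ref{lem:NonEmpty} lets me fix any $\omega_2 \in T$, and then $b \leq \Gamma(\omega_2) < \infty$, which finishes the proof (and incidentally shows $b \leq B$).

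The heart of the argument, and the step that actually uses the monotone concentration hypothesis, is proving this comparison. I will argue by contradiction: suppose $\Gamma(\omega_1) > \Gamma(\omega_2)$. Because $g$ is a monotone concentration of $\Gamma$ (equivalently, $\Gamma$ is a monotone dispersion of $g$), condition \eqref{eq:MonSp2} of Definition \ref{def:dispersion} gives $g(\omega_1) > g(\omega_2)$, and then condition \eqref{eq:MonSp3} yields
\[
\frac{g(\omega_1)}{g(\omega_2)} > \frac{\Gamma(\omega_1)}{\Gamma(\omega_2)}.
\]
On the other hand, since $\omega_1 \in S$ gives $g(\omega_1) \leq \Gamma(\omega_1)$ and $\omega_2 \in T$ gives $g(\omega_2) \geq \Gamma(\omega_2) > 0$, dividing these two inequalities produces $g(\omega_1)/g(\omega_2) \leq \Gamma(\omega_1)/\Gamma(\omega_2)$, contradicting the previous display.

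The only place one has to be slightly careful is that positivity of $g(\omega_2)$ and $\Gamma(\omega_2)$ is needed to rearrange ratios; this is guaranteed by the standing assumption that the distributions map into $\mathbb{R}_{++}$. Beyond that, the proof is purely a combinatorial application of the three defining conditions in Definition \ref{def:dispersion}, so I do not foresee any additional obstacle.
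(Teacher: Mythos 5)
Your proof is correct and follows essentially the same route as the paper's: both invoke Lemma \ref{lem:NonEmpty} and derive a contradiction with condition \eqref{eq:MonSp3} (via condition \eqref{eq:MonSp2}) from the sandwich $\Gamma(\omega_1) \geq g(\omega_1) > g(\omega_2) \geq \Gamma(\omega_2) > 0$, the only difference being that you package this as the uniform comparison $\Gamma(\omega_1) \leq \Gamma(\omega_2)$ for all $\omega_1$ with $g(\omega_1)\leq\Gamma(\omega_1)$ and $\omega_2$ with $g(\omega_2)\geq\Gamma(\omega_2)$, rather than assuming unboundedness and picking a large value of $\Gamma$. As a small bonus, your formulation also yields $b \leq B$ (the paper's Lemma \ref{lem:bB}) immediately by taking the supremum and infimum.
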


\begin{proof}
	Due to Lemma \ref{lem:NonEmpty}, there exist $\omega_1 \in \{\omega : g(\omega) \leq \Gamma(\omega)\}$ and $\omega_2 \in \{\omega : g(\omega) \geq \Gamma(\omega)\}$. Suppose $\Gamma(\{\omega : g(\omega) \leq \Gamma(\omega)\})$ is not bounded above. Then, we can choose $\omega_1$ such that $\Gamma(\omega_1) > g(\omega_2)$. To summarize, we have
	\[
	\Gamma(\omega_1) > g(\omega_2) \geq \Gamma(\omega_2) > 0
	\]
	and
	\[
	\Gamma(\omega_1) \geq g(\omega_1).
	\]
	As $g$ is a monotone concentration of $\Gamma$, condition \eqref{eq:MonSp2} stipulates that \[
	\Gamma(\omega_1) > \Gamma(\omega_2) \quad \Leftrightarrow \quad g(\omega_1) > g(\omega_2).
	\]
	Therefore, 
	\[
	\Gamma(\omega_1) \geq g(\omega_1) > g(\omega_2) \geq \Gamma(\omega_2) > 0.
	\]
	This implies
	\[
	\frac{\Gamma(\omega_1)}{\Gamma(\omega_2)} > \frac{g(\omega_1)}{g(\omega_2)} \qquad \textrm{and} \qquad g(\omega_1) > g(\omega_2),
	\]
	contradicting condition \eqref{eq:MonSp3}, which holds since $g$ is a monotone concentration of $\Gamma$. Therefore, $\Gamma(\omega_1) \leq g(\omega_2)$ and, since $\omega_1$ was chosen arbitrarily, this holds for all $\omega_1 \in \{\omega : g(\omega) \leq \Gamma(\omega)\}$. Hence, $g(\omega_2)$ is an upper bound of $\Gamma(\{\omega : g(\omega) \leq \Gamma(\omega)\} \subset \mathbb{R}$. The completeness of $\mathbb{R}$, thereby, implies that $b \in \mathbb{R}$. That $B \in \mathbb{R}$ can be shown similarly, though it can be a bit simpler because $\Gamma(\{\omega : g(\omega) \geq \Gamma(\omega)\}) > 0$.
\end{proof}

\begin{lemma}\label{lem:bB}
	If $g$ is a monotone concentration of $\Gamma$ then $b \leq B$.
\end{lemma}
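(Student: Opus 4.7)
The plan is to prove the contrapositive-style claim pointwise: for any $\omega_1 \in \{\omega : g(\omega) \leq \Gamma(\omega)\}$ and any $\omega_2 \in \{\omega : g(\omega) \geq \Gamma(\omega)\}$, show that $\Gamma(\omega_1) \leq \Gamma(\omega_2)$. Once this is established, I can take the supremum over $\omega_1$ on the left and the infimum over $\omega_2$ on the right to obtain $b \leq B$ directly from the definitions.

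For the pointwise inequality, I would argue by contradiction. Suppose instead that $\Gamma(\omega_1) > \Gamma(\omega_2)$ for some such pair. Since $g$ is a monotone concentration of $\Gamma$ (equivalently, $\Gamma$ is a monotone dispersion of $g$), condition \eqref{eq:MonSp2} forces $g(\omega_1) > g(\omega_2)$, and then condition \eqref{eq:MonSp3} sharpens this into the strict ratio inequality
\[
\frac{g(\omega_1)}{g(\omega_2)} > \frac{\Gamma(\omega_1)}{\Gamma(\omega_2)}.
\]

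The contradiction comes from the membership hypotheses on $\omega_1$ and $\omega_2$. From $g(\omega_1) \leq \Gamma(\omega_1)$ and $g(\omega_2) \geq \Gamma(\omega_2) > 0$, dividing gives
\[
\frac{g(\omega_1)}{g(\omega_2)} \leq \frac{\Gamma(\omega_1)}{\Gamma(\omega_2)},
\]
which is incompatible with the strict inequality above. Hence $\Gamma(\omega_1) \leq \Gamma(\omega_2)$, as claimed. Taking $\sup$ over $\omega_1 \in \{g \leq \Gamma\}$ and $\inf$ over $\omega_2 \in \{g \geq \Gamma\}$, and noting that Lemma \ref{lem:NonEmpty} together with Lemma \ref{lem:InR} guarantees that both sets are nonempty and both extrema are finite real numbers, yields $b \leq B$.

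The argument is short and the only real subtlety is keeping the direction of each inequality straight: the positivity of $g$ and $\Gamma$ on $\Omega$ is needed to divide freely, and the strictness of \eqref{eq:MonSp3} is what ultimately clashes with the weak inequality produced by the defining set conditions. No delicate estimates or compactness considerations enter, so I expect no serious obstacle beyond careful bookkeeping of the roles of $\omega_1$ and $\omega_2$.
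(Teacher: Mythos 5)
Your proof is correct, and the core clash it exploits is exactly the one the paper uses: condition \eqref{eq:MonSp2} upgrades $\Gamma(\omega_1) > \Gamma(\omega_2)$ to $g(\omega_1) > g(\omega_2)$, condition \eqref{eq:MonSp3} then forces $\nicefrac{g(\omega_1)}{g(\omega_2)} > \nicefrac{\Gamma(\omega_1)}{\Gamma(\omega_2)}$, while the memberships $g(\omega_1) \leq \Gamma(\omega_1)$ and $g(\omega_2) \geq \Gamma(\omega_2) > 0$ give the opposite weak ratio inequality. The organization differs, though: the paper argues by contradiction directly at the level of $b$ and $B$, assuming $B < b$ and using the definitions of supremum and infimum to extract witnesses $\omega_1, \omega_2$ whose $\Gamma$-values straddle $\nicefrac{(b+B)}{2}$ (displays \eqref{eq:bB1}--\eqref{eq:bB2}), whereas you prove the stronger pointwise separation statement that $\Gamma(\omega_1) \leq \Gamma(\omega_2)$ for \emph{every} $\omega_1 \in \{\omega : g(\omega) \leq \Gamma(\omega)\}$ and \emph{every} $\omega_2 \in \{\omega : g(\omega) \geq \Gamma(\omega)\}$, and then pass to the supremum and infimum (justified by Lemmas \ref{lem:NonEmpty} and \ref{lem:InR}). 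Your route buys two small things: it dispenses with the approximate-witness step entirely, and it needs only the weak inequalities that actually define the two sets --- the paper's displays \eqref{eq:bB1} and \eqref{eq:bB2} assert strict inequalities $g(\omega_1) < \Gamma(\omega_1)$ and $\Gamma(\omega_2) < g(\omega_2)$ that the sup/inf definitions do not literally deliver, though nothing in the paper's argument depends on that strictness. The paper's version is marginally more economical in that it works only with the two near-extremal points; yours yields the reusable fact that $\Gamma$ on $\{g \leq \Gamma\}$ is dominated pointwise by $\Gamma$ on $\{g \geq \Gamma\}$, from which $b \leq B$ is immediate.
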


\begin{proof}
	Suppose that $B < b$. Then, use the definition of the supremum of a set to establish that there exists $\omega_1 \in \Omega$ for which 
	\begin{equation}\label{eq:bB1}
	g(\omega_1) < \Gamma(\omega_1) \qquad \textrm{and} \qquad \frac{b+B}{2} < \Gamma(\omega_1) \leq b.
	\end{equation}
	Similarly, the definition of the infimum of a set guarantees that there exists $\omega_2 \in \Omega$ such that
	\begin{equation}\label{eq:bB2}
	\Gamma(\omega_2) < g(\omega_2)   \qquad \textrm{and} \qquad  B \leq \Gamma(\omega_2) < \frac{b+B}{2}.
	\end{equation}
	Expressions \eqref{eq:bB1} and \eqref{eq:bB2} imply that $\Gamma(\omega_2) < \Gamma(\omega_1)$, which is true if and only if $g(\omega_2) < g(\omega_1)$ since $g$ be a monotone concentration of $\Gamma$. Therefore, 
	\[
	\Gamma(\omega_1) \geq g(\omega_1) > g(\omega_2) \geq \Gamma(\omega_2) > 0,
	\]
	which, as shown in the proof to Lemma \ref{lem:InR}, is inconsistent with condition \eqref{eq:MonSp3}. Therefore $b \leq B$.
\end{proof}

\begin{lemma}\label{lem:r}
	If $g$ is a monotone concentration of $\Gamma$ then there exists $r \in \mathbb{R}$ such that 
	\begin{equation}\label{eq:Lemma4.1}
	\Gamma(\omega) > r \quad \Rightarrow \quad g(\omega) > \Gamma(\omega)
	\end{equation}
	and
	\begin{equation}\label{eq:Lemma4.2}
	\Gamma(\omega) < r \quad \Rightarrow \quad g(\omega) < \Gamma(\omega).
	\end{equation}
\end{lemma}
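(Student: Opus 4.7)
The plan is to pick any $r$ from the interval $[b, B]$, which is nonempty by Lemmas \ref{lem:InR} and \ref{lem:bB}, and then verify that both implications follow by simple contrapositive arguments from the definitions of $b$ as a supremum and $B$ as an infimum.

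For implication \eqref{eq:Lemma4.1}, I would reason as follows. Suppose $\Gamma(\omega) > r$. Since $r \geq b$, this gives $\Gamma(\omega) > b$. If $g(\omega) \leq \Gamma(\omega)$ were to hold, then $\omega$ would belong to $\{\omega' : g(\omega') \leq \Gamma(\omega')\}$, so $\Gamma(\omega)$ would be an element of the image set $\Gamma(\{\omega' : g(\omega') \leq \Gamma(\omega')\})$, forcing $\Gamma(\omega) \leq b$ and contradicting $\Gamma(\omega) > b$. Hence $g(\omega) > \Gamma(\omega)$, which is the desired conclusion.

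For implication \eqref{eq:Lemma4.2}, the argument is symmetric. Suppose $\Gamma(\omega) < r$. Since $r \leq B$, this gives $\Gamma(\omega) < B$. If $g(\omega) \geq \Gamma(\omega)$ held, then $\omega$ would belong to $\{\omega' : g(\omega') \geq \Gamma(\omega')\}$, so $\Gamma(\omega)$ would lie in the image $\Gamma(\{\omega' : g(\omega') \geq \Gamma(\omega')\})$, forcing $\Gamma(\omega) \geq B$ and contradicting $\Gamma(\omega) < B$. Hence $g(\omega) < \Gamma(\omega)$.

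There is no real obstacle here beyond invoking the earlier lemmas: the heavy lifting was already done in Lemma \ref{lem:InR} (showing $b,B$ are real) and Lemma \ref{lem:bB} (showing $b \leq B$, so that the interval $[b,B]$ from which $r$ is drawn is indeed nonempty). Once those facts are in hand, the two implications are immediate consequences of the definitions of supremum and infimum via contraposition, and no further use of condition \eqref{eq:MonSp3} from the definition of monotone concentration is required at this stage.
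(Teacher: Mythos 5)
Your proof is correct and follows essentially the same route as the paper: pick any $r \in [b,B]$ (nonempty by Lemmas \ref{lem:InR} and \ref{lem:bB}) and deduce each implication from the definitions of $b$ as a supremum and $B$ as an infimum via contraposition. The only cosmetic difference is that you write out the second implication explicitly where the paper declares it ``essentially identical.''
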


\begin{proof}
	I prove \eqref{eq:Lemma4.1}, as the proof of \eqref{eq:Lemma4.2} is essentially identical. Lemma \ref{lem:bB} guarantees that $[b,B] \neq \emptyset$, so we may consider any $r \in [b,B]$. Choose $\omega_0 \in \Omega$ such that $\Gamma(\omega_0) > r$. Then 
	\[
	b \leq r < \Gamma(\omega_0).
	\]
	$\Gamma(\omega_0) > b$ implies that $\omega_0 \notin \{\omega:g(\omega)\leq\Gamma(\omega)\}$, so $g(\omega_0) > \Gamma(\omega_0)$.
\end{proof} 

\begin{proof}[Proof of Theorem \ref{thm:Entropy}]
	Since the $\log$ function is concave, Jensen's inequality implies
	\begin{equation}\label{eq:Kullback–Leibler}
	\int_\Omega g(\omega) \log \frac{\Gamma(\omega)}{g(\omega)} \, dM(\omega) \leq \log\int_\Omega g(\omega)  \frac{\Gamma(\omega)}{g(\omega)} \, dM(\omega). 
	\end{equation}
	The right-hand side of expression \eqref{eq:Kullback–Leibler} equals zero since, using the fact that $\Gamma$ is a distribution and integrates to unity, $\log\int_\Omega \Gamma(\omega) \, dM(\omega) = \log 1$. Therefore, expression \eqref{eq:Kullback–Leibler} implies Gibbs' inequality:\footnote{Gibbs' inequality is a well known fact from statistical physics. Previous versions of this proof initiated with Gibbs' inequality rather than Jensen's inequality. I decided to change this because the education of economists typically involves exposure to Jensen's inequality and not Gibbs' inequality.}
	\begin{equation*}\label{eq:Gibbs1}
	\int_\Omega g(\omega) \log \Gamma(\omega) \, dM(\omega) \leq \int_\Omega g(\omega) \log g(\omega) \, dM(\omega),
	\end{equation*}
	which can also be expressed
	\begin{equation}\label{eq:Gibbs2}
	\int_\Omega g(\omega) \log \Gamma(\omega) \, dM(\omega) \leq -H(g).
	\end{equation}
	Add $H(\Gamma)$ to both sides of expression \eqref{eq:Gibbs2} to find that
	\begin{equation*}\label{eq:Gibbs3}
	H(\Gamma) + \int_\Omega g(\omega) \log \Gamma(\omega) \, dM(\omega) \leq H(\Gamma)-H(g),
	\end{equation*}
	implying that 
	\begin{equation}\label{eq:Gibbs4}
	\int_\Omega [g(\omega)-\Gamma(\omega)] \log \Gamma(\omega) \, dM(\omega) \leq H(\Gamma)-H(g),
	\end{equation}
	Lemma \ref{lem:bB} asserts that $[b,B]$ is non-empty, so we can choose any $r \in [b, B]$ and write:
	\begin{equation}\label{eq:zeros}
	-\log r \int_\Omega g(\omega) - \Gamma(\omega) \, dM(\omega) = 0,
	\end{equation}
	which is true because  $g$ and $\Gamma$ are both distributions. Adding expressions \eqref{eq:Gibbs4} and \eqref{eq:zeros} and some rearranging yields
	\begin{equation}\label{eq:Finale}
	\int_\Omega [g(\omega)-\Gamma(\omega)] (\log \Gamma(\omega) -\log r) \, dM(\omega) \leq H(\Gamma)-H(g).
	\end{equation}
	By Lemma \ref{lem:r}, $r \in [b, B]$ implies that $\log \Gamma(\omega) -\log r$ and $g(\omega)-\Gamma(\omega)$ have the same sign, so the left-hand side of expression \eqref{eq:Finale} is non-negative. Therefore, $H(g) \leq H(\Gamma)$. If, additionally, $M(\{\omega \in \Omega: \Gamma(\omega) \neq r\}) > 0$ then the integral constituting the left-hand side of expression \eqref{eq:Finale} can be decomposed into
	\[
	\int\displaylimits_{\{\omega: \Gamma(\omega) \neq r\}} [g(\omega)-\Gamma(\omega)] (\log \Gamma(\omega) -\log r) \, dM(\omega)
	\]
	plus
	\[
	\int\displaylimits_{\{\omega: \Gamma(\omega) = r\}} [g(\omega)-\Gamma(\omega)] (\log \Gamma(\omega) -\log r) \, dM(\omega).
	\]
	The former, by Lemma \ref{lem:r}, is strictly positive and the latter equals zero, so $H(g) < H(\Gamma)$. 
\end{proof}

\bibliography{arXiv_Information.Theoretic.Foundation}
\end{document}